\documentclass[11pt]{article}
\usepackage{amsmath,amssymb,amsthm}
\usepackage[margin=1in]{geometry}
\usepackage[hidelinks]{hyperref}
\hypersetup{pdftitle={Typical growth of the Furedi--Hajnal and Stanley--Wilf limits},pdfauthor={Jesse Geneson}}
\newtheorem{theorem}{Theorem}
\newtheorem{lemma}[theorem]{Lemma}
\newtheorem{corollary}[theorem]{Corollary}
\newcommand{\ex}{\operatorname{ex}}
\newcommand{\Prb}{\mathbb P}
\title{Typical growth of the F\"uredi--Hajnal\\and Stanley--Wilf limits}
\author{Jesse Geneson}
\date{}
\begin{document}
\maketitle

 \begin{abstract}
 \noindent
We prove that the F\"uredi--Hajnal limit and the Stanley--Wilf limit
of a uniformly random permutation matrix of order $k$ are at most
$\exp\bigl(O(\sqrt{k}(\log k)^{5/2})\bigr)$ with probability tending
to one as $k\to\infty$. This improves the bound
$\exp\bigl(O(k^{2/3}(\log k)^{7/3}/(\log\log k)^{1/3})\bigr)$
of Cibulka and Kyn\v cl. Together with the lower bound due to Fox,
these bounds show that the logarithms of both limits are
$k^{1/2+o(1)}$ for almost all permutations.
 \\[2mm]
 {\bf Keywords:} permutation patterns; forbidden matrices; F\"uredi--Hajnal limit; Stanley--Wilf limit; Lov\'asz local lemma.\\[2mm]
 {\bf 2020 Mathematics Subject Classification:} 05A05, 05A16, 05D40.
 \end{abstract}

\section{Introduction}

We say that a zero-one matrix $A$ \emph{contains} a zero-one matrix
$P$ if some submatrix of $A$ can be turned into $P$ by changing
some ones to zeros. The selected rows and columns retain their order.
Otherwise $A$ \emph{avoids} $P$. Let $\ex(n,P)$ be the maximum
number of ones in an $n\times n$ matrix avoiding $P$. A
\emph{permutation matrix} is a square zero-one matrix with exactly
one $1$ in each row and column. F\"uredi and Hajnal \cite{FH} studied these extremal functions and
conjectured that $\ex(n,P)=O(n)$ for every permutation matrix $P$.
Marcus and Tardos \cite{MT} proved this conjecture. For a permutation
matrix $P$ of order at least two, define its \emph{F\"uredi--Hajnal
limit} by $c(P)=\lim_{n\to\infty}\ex(n,P)/n$.
Superadditivity \cite[Lemma 1(ii)]{PT} gives the existence of this
limit and $\ex(n,P)\leq c(P)n$ for every $n$. Marcus and Tardos's
theorem implies that $c(P)$ is finite.

The coefficient $c(P)$ is also related to permutation enumeration.
Let $\operatorname{Av}_n(P)$ be the set of permutation matrices
of order $n$ avoiding $P$. The Stanley--Wilf conjecture asserted
that $|\operatorname{Av}_n(P)|\leq K_P^n$ for some constant $K_P$.
Alon and Friedgut \cite{AF} obtained an almost-exponential bound
using generalized Davenport--Schinzel sequences. Klazar
\cite{Klazar} proved that the F\"uredi--Hajnal conjecture implies
the Stanley--Wilf conjecture, so Marcus and Tardos's theorem
resolved both conjectures. Arratia \cite{Arratia} proved that
$|\operatorname{Av}_n(P)|$ is supermultiplicative. Thus the
\emph{Stanley--Wilf limit}
$L(P)=\lim_{n\to\infty}|\operatorname{Av}_n(P)|^{1/n}$ exists and
is finite, with $|\operatorname{Av}_n(P)|\leq L(P)^n$ for every $n$.
Cibulka \cite{Cibulka} established the
polynomial comparisons
\begin{equation}\label{eq:comparisons}
 L(P)\leq2.88c(P)^2,
 \qquad c(P)\leq A L(P)^{9/2},
\end{equation}
where $A$ is an absolute constant. Cibulka and Kyn\v cl
\cite[Theorem 1.8]{CK} later improved the second comparison to
$c(P)=O\bigl(L(P)^{11/4}\log L(P)\bigr)$ as $L(P)\to\infty$;
\eqref{eq:comparisons} suffices for our purposes.

We consider how these limits depend on the order of a typical
forbidden permutation. Fox \cite{Fox} proved that $c(P)\leq\exp(O(k))$
for every permutation matrix $P$ of order $k$, and that both
limits are at least $\exp(k^{\Omega(1)})$ for almost all permutations.
Let $P_k$ be uniformly distributed over the $k!$ permutation
matrices of order $k$. The stronger lower bound due to Fox,
as stated by Cibulka and Kyn\v cl \cite[Section 1]{CK}, holds with
probability tending to one:
\begin{equation}\label{eq:lower}
 c(P_k)\geq\exp\bigl(\Omega(\sqrt{k/\log k})\bigr).
\end{equation}
Cibulka and Kyn\v cl
\cite[Corollary 1.2]{CK} proved the upper bound
\[
 c(P_k)\leq
 \exp\left(O\left(\frac{k^{2/3}(\log k)^{7/3}}
                              {(\log\log k)^{1/3}}\right)\right)
\]
with probability tending to one. We improve this bound as follows.
All logarithms below are natural.

\begin{theorem}\label{thm:random}
There is an absolute constant $C>0$ such that, if $P_k$ is uniformly
distributed over the $k!$ permutation matrices of order $k$, then
\[
 \Prb\left(\max\{c(P_k),L(P_k)\}\leq
        \exp\bigl(C\sqrt{k}(\log k)^{5/2}\bigr)\right)
 \longrightarrow 1\qquad\text{as }k\to\infty.
\]
\end{theorem}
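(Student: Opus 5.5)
By \eqref{eq:comparisons} we have $L(P)\le 2.88c(P)^2$, so it suffices to show that $\log c(P_k)=O(\sqrt{k}(\log k)^{5/2})$ with probability tending to one. I would follow the Fox / Cibulka--Kyn\v{c}l block-decomposition framework. Partition an $n\times n$ matrix $A$ avoiding $P$ into $s\times s$ blocks, with $s$ to be chosen. Call a block \emph{wide} if it has ones in many columns and \emph{tall} if it has ones in many rows. The standard recursion then reads
\[
 c(P)\le c'\cdot s^2\cdot(\text{number of heavy blocks per block-row, normalized}) + \dots ,
\]
and the loss from blocks that are neither tall nor wide is controlled by a factor like $\binom{s^2}{t}$. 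The new ingredient must be a stronger structural property of a random permutation, used to bound how many wide blocks a block-row can contain when $A$ avoids $P$.

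Concretely, the property I would aim for is the following \emph{intersection/“interval-flexibility” property}. For a uniformly random $P_k$, with probability $1-o(1)$, every pattern occurring in $P_k$ can be embedded once we are given any $m\approx\sqrt{k}\,\mathrm{polylog}(k)$ block-columns, each containing ones in at least $w$ distinct rows, where $w$ is of order $\log$-many per needed position. The approach is to split $P_k$ into roughly $\sqrt{k}$ consecutive column-intervals of $\sqrt{k}$ columns each. A random permutation places the rows of each interval ``spread out'', so that any $\sqrt{k}$ rows of an interval can be realized greedily inside a block whose ones occupy a $(\log k)$-dense set of rows. To verify that the needed bad events (some interval has a badly clustered row set relative to a dyadic scale) are rare, I would use a union bound. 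Where events overlap only for nearby intervals, I would instead use the Lov\'asz local lemma in its counting form (lopsided or with positive probability and symmetrizing), to show that such permutations form a $1-o(1)$ fraction. Here each event depends on $O(\log k)$ others, and each has probability $k^{-\omega(1)}$.

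With this property, the recursion becomes $c(P)\le 2^{O(\sqrt{k}\log k)}\cdot\mathrm{poly}$ per level. One unrolls over $O(\log k)$ scales of block sizes (dyadic, $s=2^j$), accumulating a factor $\exp(O(\sqrt{k}(\log k)^{3/2}))$ per scale after optimizing the tall/wide thresholds. Multiplying over the $O(\log k)$ scales gives $\exp(O(\sqrt{k}(\log k)^{5/2}))$, matching the target exponent. The optimization is balancing the threshold $t$, set around $\sqrt{k}\log k$, against $\binom{s^2}{t}\le s^{2t}$ with $s$ polynomial in $k$.

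The main obstacle is the combinatorial embedding lemma: showing that in an $A$ avoiding $P_k$, a block-row cannot have more than $\exp(\tilde O(\sqrt k))$ wide blocks. This requires that a typical $P_k$ be embeddable using only approximately $\sqrt{k}$ ``coarse'' choices instead of $k^{2/3}$. I would first try a two-level greedy embedding: choose block-columns for the $\sqrt{k}$ column-intervals of $P_k$, then fill each interval inside its block using the randomness bound. Only if the dependencies make the union bound fail would I invoke the local lemma to control them.
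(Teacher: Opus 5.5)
\noindent Your reduction to $c(P_k)$ via $L\le2.88c^2$ is fine. The rest is a plan rather than a proof, and it breaks at the step you yourself flag as the main obstacle.

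\noindent The bound $\exp(\tilde O(\sqrt k))$ on the number of wide blocks in a block-row is never proved, and the proposed two-level greedy embedding does not work as described. In the Marcus--Tardos pigeonhole argument each block supplies a single one of $P$, so only the set of occupied rows or columns matters. Once a block must host a whole column-interval of $P_k$, which is about $\sqrt k$ ones in a prescribed relative order, knowing that its ones occupy a dense set of rows says nothing about their column order. A block whose ones lie on an increasing diagonal occupies every row and column, yet contains only increasing patterns. Your ``interval-flexibility'' property is also not stated precisely enough to be checked.

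\noindent The probabilistic step is misdirected as well. The local lemma gives $\Prb(\bigcap_i\overline E_i)\geq\prod_i(1-x_i)$ with $x_i\geq p_i$, and this lower bound is at most $e^{-\sum_i p_i}$. So it yields $1-o(1)$ only when $\sum_i p_i=o(1)$, and then the union bound already works. It cannot rescue a failing union bound in a statement about almost all permutations.

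\noindent Finally, the exponent is asserted, not derived. With your threshold $t\approx\sqrt k\log k$ and $s$ polynomial in $k$, the factor $s^{2t}$ is $\exp(\Theta(\sqrt k(\log k)^2))$, not $\exp(O(\sqrt k(\log k)^{3/2}))$. Nothing in the sketch explains why a Marcus--Tardos-type recursion would multiply one such factor per dyadic scale over only $O(\log k)$ scales.

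\noindent The paper avoids wide and tall blocks altogether. The missing idea is to measure $t=t_s(P)$, the largest number of ones of $P$ in a cell $I_a\times I_b$ of the $s$-grid, and to embed $P$ into \emph{nearly full} matrices. The key claim is that every $2sm\times2sm$ matrix with at most $s/(32t)$ zeros in each row and column contains $P$. To prove it, shift each row interval and each column interval of $P$ by independent uniform $X_a,Y_b\in\{0,\ldots,s-1\}$. Then $\Prb(E_i)\leq z_{ab}/s^2$, and the total over one interval is at most $\frac{t}{s^2}\cdot2s\cdot\frac{s}{32t}=\frac1{16}$. The asymmetric local lemma (Lemma~\ref{lem:lll}) then gives a copy; it is applied to the shifts, where positive probability is all that is needed.

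\noindent Deleting the $k$ heaviest rows and then columns of a $4k\times4k$ matrix with at most $ks/(32t)$ zeros gives $\ex(4k,P)<(1-\frac{s}{512tk})(4k)^2$. Cibulka and Kyn\v cl's transfer theorem with $u=4k$ then gives $\log c(P)=O(\frac{kt}{s}(\log k)^2)$. The only randomness in $P_k$ is a plain union bound showing $t_s(P_k)<\lceil16\log k\rceil$ for $s=\lceil\sqrt{k\log k}\rceil$. Hence $kt/s=O(\sqrt{k\log k})$, and $\log c(P_k)=O(\sqrt{k\log k}\,(\log k)^2)$, which is where the $(\log k)^{5/2}$ actually comes from.
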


Theorem~\ref{thm:random} determines the power of $k$ in the logarithms
of both limits. More precisely, combining the theorem with
\eqref{eq:comparisons} and \eqref{eq:lower} gives the following consequence.

\begin{corollary}\label{cor:exponent}
For every $\varepsilon>0$, a uniformly random permutation matrix
$P_k$ of order $k$ satisfies
\[
 k^{1/2-\varepsilon}\leq\log c(P_k)\leq k^{1/2+\varepsilon}
 \quad\text{and}\quad
 k^{1/2-\varepsilon}\leq\log L(P_k)\leq k^{1/2+\varepsilon}
\]
with probability tending to one as $k\to\infty$.
\end{corollary}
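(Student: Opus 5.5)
The statement to prove is Corollary~\ref{cor:exponent}, which should follow by combining Theorem~\ref{thm:random}, the comparisons \eqref{eq:comparisons}, and the lower bound \eqref{eq:lower}. The plan is to bound $\log c(P_k)$ and $\log L(P_k)$ above and below separately. Each bound holds on an event of probability tending to one. The intersection of finitely many such events (here two) also has probability tending to one. Finally, every constant and logarithmic factor is absorbed into $k^{\pm\varepsilon}$ for $k$ large.

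\emph{Upper bounds.} Let $E_k$ be the event in Theorem~\ref{thm:random}. On $E_k$,
\[
\max\{\log c(P_k),\log L(P_k)\}\le C\sqrt{k}(\log k)^{5/2}.
\]
Since $(\log k)^{5/2}=k^{o(1)}$, the right side is at most $k^{1/2+\varepsilon}$ once $k\ge k_0(\varepsilon)$. This gives both upper bounds with probability at least $\Prb(E_k)\to1$.

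\emph{Lower bounds.} Let $F_k$ be the event in \eqref{eq:lower}, so that $c(P_k)\ge\exp\bigl(a\sqrt{k/\log k}\bigr)$ for some absolute $a>0$ and $\Prb(F_k)\to1$. On $F_k$ we get $\log c(P_k)\ge a\sqrt{k}/\sqrt{\log k}\ge k^{1/2-\varepsilon}$ for large $k$. For $L$ we use the second inequality in \eqref{eq:comparisons}, $c(P)\le A L(P)^{9/2}$. It gives
\[
\log L(P_k)\ge \tfrac{2}{9}\bigl(\log c(P_k)-\log A\bigr)\ge \tfrac{2}{9}\bigl(a\sqrt{k/\log k}-\log A\bigr),
\]
which exceeds $k^{1/2-\varepsilon}$ for large $k$ because the factor $2/9$ and the additive constant are negligible against the extra $k^{\varepsilon}$ room. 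Since $\Prb(E_k\cap F_k)\ge 1-\Prb(E_k^c)-\Prb(F_k^c)\to1$, all four inequalities hold simultaneously with probability tending to one.

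There is essentially no obstacle; the only point needing care is the dependence on $\varepsilon$. For each fixed $\varepsilon$, the threshold $k_0(\varepsilon)$ beyond which the polylogarithmic factors and constants are absorbed exists. So the convergence is for each fixed $\varepsilon$, which is exactly what is claimed.
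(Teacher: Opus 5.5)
Your proposal is correct and matches the paper's argument, which the paper states in one line: Theorem~\ref{thm:random} gives both upper bounds, \eqref{eq:lower} gives the lower bound on $\log c(P_k)$, and the comparison $c(P)\le A L(P)^{9/2}$ from \eqref{eq:comparisons} transfers that lower bound to $\log L(P_k)$. A union bound over the two high-probability events and absorbing the constants and logarithmic factors into $k^{\pm\varepsilon}$ complete it.
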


Cibulka and Kyn\v cl obtain their typical upper bound by controlling
the multiplicities of displacement vectors between pairs of ones.
We use their theorem converting a finite density bound into a bound
on $c(P)$, but prove a different embedding lemma based on cell
occupancies. Partition the rows and columns of $P$ into intervals
of length $s$, apart from possibly
shorter final intervals, and let $t_s(P)$ be the largest number of
ones in one cell. In Section~\ref{sec:cells}, we prove the
deterministic bound
\[
 \log c(P)=O\left(\frac{k\,t_s(P)}s(\log k)^2\right)
 \qquad\left(1\leq s\leq\lfloor k/2\rfloor+1\right).
\]
We place each row interval and column interval of $P$ at an
independently chosen translate in a larger matrix. The probabilities
that pattern ones land on zeros may differ, but the sums of these
probabilities over any row interval or column interval are bounded
using the total number of zeros in the corresponding strip and
the cell occupancy $t_s(P)$. A consequence of the asymmetric local
lemma then gives a copy of $P$ when each row and column
of the larger matrix has few zeros. Deleting rows and columns with
many zeros gives a finite density bound, to which we apply
Cibulka and Kyn\v cl's theorem \cite[Theorem 1.5]{CK}.
In Section~\ref{sec:random}, we show
that $s=\lceil\sqrt{k\log k}\rceil$ gives $t_s(P_k)=O(\log k)$
with probability tending to one, which proves Theorem~\ref{thm:random}.

\section{A bound from cell occupancies}\label{sec:cells}

For integers $k\geq2$ and $1\leq s\leq k$, partition $[k]=\{1,\ldots,k\}$
into the consecutive intervals
\[
 I_a=\{(a-1)s+1,\ldots,\min(as,k)\},
 \qquad 1\leq a\leq m=\lceil k/s\rceil.
\]
For a permutation matrix $P$ of order $k$, let $t_s(P)$ be the
maximum number of ones in any cell $I_a\times I_b$. Note that
$1\leq t_s(P)\leq s$.

\begin{theorem}\label{thm:cells}
There is an absolute constant $C>0$ such that, for every permutation
matrix $P$ of order $k\geq2$ and every integer
$1\leq s\leq\lfloor k/2\rfloor+1$, we have
\[
 \log c(P)\leq C\frac{k\,t_s(P)}s(\log k)^2.
\]
\end{theorem}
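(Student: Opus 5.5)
The plan follows the outline in the introduction. We use Cibulka and Kyn\v cl's reduction \cite[Theorem 1.5]{CK}. In the form I will use, it says roughly the following. Suppose that for some $N$ every $N\times N$ zero-one matrix with density at least $1-\delta$, or more conveniently with at most $\delta N$ zeros in every row and column, contains $P$. Then $c(P)\le \exp(O(\log(1/\delta)\cdots))$, or more precisely $c(P)$ is bounded by a polynomial in $N/\delta$ and similar quantities. So it suffices to show the following claim with $N$ polynomial in $k$. Every $N\times N$ matrix $M$ in which each row and column has at most $\delta N$ zeros, with $\delta=\exp(-C'k\,t_s(P)(\log k)^2/s)$ or a similar value, contains $P$. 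The step from a global density bound to the row/column condition is done by deleting rows and columns with more than twice the average number of zeros. A Markov-type argument shows this removes at most half of them.

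\textbf{Embedding by random translates.} Take $N=Ls\cdot m$ with $m=\lceil k/s\rceil$, and split the rows of $M$ into $m$ consecutive blocks $R_1,\dots,R_m$ of height $Ls$. Do the same for the columns, giving blocks $C_1,\dots,C_m$. For each row interval $I_a$ of $P$, choose independently a uniform offset $x_a\in\{0,\dots,(L-1)s\}$ and map row $i\in I_a$ to row $x_a+(i-(a-1)s)$ of $R_a$. Choose column offsets $y_b$ in the same way. This preserves order, so if every one of $P$ lands on a one of $M$, we obtain a copy of $P$.

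For a one $(i,\pi(i))$ with $i\in I_a$ and $\pi(i)\in I_b$, let $E_i$ be the bad event that it lands on a zero. Its probability is the number of zeros of $M$ in the corresponding sampled row-column grid, divided by $L^2s^2$ roughly. Summed over the ones in a row interval $I_a$, each fixed row of $R_a$ is hit with probability $1/(Ls)$ per one. The key estimate is
\[
\sum_{i\in I_a}\Prb(E_i)\le \frac{t_s(P)\cdot(\text{zeros in the strip }R_a)}{(Ls)^2}\cdot(\text{const}),
\]
and the same bound holds for column intervals. To see this, the ones of $I_a$ split into at most $m$ groups by column block, each of size at most $t_s(P)$. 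Each zero of the strip $R_a$ is counted at most $t_s(P)$ times, with weight $1/(Ls)^2$. With at most $\delta N$ zeros per row, a strip has at most $Ls\cdot\delta N$ zeros. So the sum is at most $t_s\delta N/(Ls)=t_s\delta m$.

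\textbf{Local lemma.} The event $E_i$ depends only on $x_a$ and $y_b$. Group the events by variable: the events of one row interval form one block, and there are $2m$ blocks. Each event shares variables with the events of two blocks, that is, with at most about $2s$ other events. We apply a variable-block asymmetric local lemma, in the form ``if for every variable $v$ the sum of $\Prb(E)$ over events $E$ depending on $v$ is at most $1/4$ or so, then $\Prb(\bigcap \overline{E_i})>0$.'' This is the consequence alluded to in the introduction. Setting $x(E)=2\Prb(E)$ suffices if $\Prb(E)\le 1/2$ and the blockwise sums are small. So we need $t_s\delta m\le c$, giving $\delta\approx s/(k\,t_s)$.

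\textbf{Main obstacle.} This $\delta$ is only polynomial, and plugging it naively into \cite[Theorem 1.5]{CK} gives only a polynomial bound. That is impossible, since it would contradict \eqref{eq:lower}. So the reduction must instead be iterated: density $1-\delta$ at scale $N$ is used to bootstrap across $\log k$ scales. In Cibulka--Kyn\v cl's form, one needs a finite matrix of size $N$ that forces $P$ once it has a certain density, and the resulting bound is $c(P)\le 2^{O(N\log\ldots)}$-type. With $N\sim Lk$ and the $(\log k)^2$ loss coming from the iterated deletion and the translate ranges, one gets $\log c(P)=O(k t_s(\log k)^2/s)$. I expect the hardest step to be matching exactly the hypothesis of \cite[Theorem 1.5]{CK}. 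Concretely, one must choose $L$ (likely polylogarithmic) and the number of scales so that the exponent comes out as $m\,t_s(\log k)^2$. That is where I would spend the effort, and I would first try taking $L=k^{O(1)}$ and tracking each $\log$ factor.
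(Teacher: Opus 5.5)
\textbf{Where the proposal breaks.} Your embedding step is essentially the paper's. You place each row and column interval of $P$ at an independent random offset inside blocks of height $Ls$, bound the sum of $\Prb(E_i)$ over a row or column interval by $t_s(P)$ times the zeros in the corresponding strip divided by roughly $s^2$, and close with the blockwise form of the asymmetric local lemma. The paper does the same with $L=2$ and offsets in $\{0,\dots,s-1\}$. Your Markov-type deletion of heavy rows and columns is also a fine substitute for the paper's deletion of the $k$ worst rows and then the $k$ worst columns of a $4k\times4k$ matrix.

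The genuine gap is the transfer step. You state Cibulka and Kyn\v{c}l's Theorem~1.5 as giving a bound on $c(P)$ polynomial in $N/\delta$. From this false premise you correctly deduce a contradiction with Fox's lower bound. You then look for the fix in an unspecified ``bootstrapping across $\log k$ scales'' with $L=k^{O(1)}$, which is never carried out. As written, the proposal does not reach $\log c(P)\le C\,k\,t_s(P)(\log k)^2/s$. Your stated target $\delta=\exp(-C'k\,t_s(\log k)^2/s)$ is also aimed at the wrong quantity.

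\textbf{The missing step.} The actual theorem says: if $u$ is a positive integer, $1/u<q<1$, and $\ex(u,P)<qu^2$, then $c(P)\le 2u^{3+\lceil\log u/(-\log q)\rceil}$. The exponent is of order $\log u/(1-q)$. So a zero density $1-q$ that is only inverse-polynomially small already gives $\log c(P)=O\bigl((\log u)^2/(1-q)\bigr)$, which is exactly the shape you need.

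Your own local lemma computation shows that a zero density of order $s/(k\,t_s(P))$ forces $P$. Concretely, with $t=t_s(P)$, every $4k\times4k$ matrix with at most $ks/(32t)$ zeros contains $P$. Hence $\ex(4k,P)<q(4k)^2$ with $1-q=s/(512tk)$. Since $-\log q\ge1-q$, this gives $c(P)\le2(4k)^{4+(512tk/s)\log(4k)}$, and taking logarithms proves the theorem.

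The factor $(\log k)^2$ is one $\log u$ from the base and one from the exponent. It does not come from iterated deletions or from the range of translates. No multi-scale argument and no large $L$ are needed.
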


We use the following consequence of the asymmetric Lov\'asz local
lemma; see Moser and Tardos \cite[Theorem 1.1]{MoT}.

\begin{lemma}\label{lem:lll}
Let $E_1,\ldots,E_N$ be events determined by independent random
variables. Join two distinct events when their sets of variables
intersect, and let $N(i)$ be the set of neighbors of $E_i$.
Write $p_i=\Prb(E_i)$. If, for every $i$,
\[
 p_i\leq\frac14
 \quad\text{and}\quad
 \sum_{j\in N(i)}p_j\leq\frac14,
\]
then $\Prb(\bigcap_{i=1}^N\overline E_i)>0$.
\end{lemma}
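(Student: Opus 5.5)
The plan is to derive the statement directly from the asymmetric local lemma in the form given by Moser and Tardos \cite[Theorem 1.1]{MoT}. In the variable setting, that theorem says the following. Suppose each event $E_i$ is determined by a set of independent random variables, and $N(i)$ is the set of other events sharing a variable with $E_i$. If there are reals $x_i\in[0,1)$ with
\[
 p_i\leq x_i\prod_{j\in N(i)}(1-x_j)\qquad\text{for every }i,
\]
then $\Prb(\bigcap_i\overline E_i)>0$. No separate independence argument is needed: the graph described in the lemma is exactly the variable dependency graph used by Moser and Tardos. So the whole task is to choose the weights $x_i$ and verify this inequality from the two hypotheses $p_i\leq 1/4$ and $\sum_{j\in N(i)}p_j\leq 1/4$.

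I will take $x_i=2p_i$. The first hypothesis gives $x_i\leq 1/2<1$, so each $x_i$ lies in $[0,1)$, including the degenerate case $p_i=0$, where $x_i=0$. For the product I will use the elementary inequality
\[
 \prod_j(1-a_j)\geq 1-\sum_j a_j\qquad(a_j\in[0,1]),
\]
which follows by induction on the number of factors. Applying it with $a_j=x_j=2p_j\in[0,1/2]$ and using the second hypothesis gives
\[
 \prod_{j\in N(i)}(1-x_j)\geq 1-2\sum_{j\in N(i)}p_j\geq \tfrac12 .
\]
Consequently $x_i\prod_{j\in N(i)}(1-x_j)\geq 2p_i\cdot\tfrac12=p_i$. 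This is the required condition, and the conclusion follows from the cited theorem.

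There is no real obstacle here; the only step needing care is checking that the weights stay strictly below $1$, which is exactly what the hypothesis $p_i\leq 1/4$ provides. The same choice of $x_i$ also works with the classical Erd\H{o}s--Lov\'asz asymmetric local lemma applied to this dependency graph, since each $E_i$ is mutually independent of all events sharing no variable with it.
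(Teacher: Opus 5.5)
Your proposal is correct and matches the paper's proof: the same weights $x_i=2p_i$, the same bound $\prod_j(1-a_j)\geq1-\sum_j a_j$, and the same inequality $x_i\prod_{j\in N(i)}(1-x_j)\geq p_i$. The only difference is that the paper first discards events of probability zero so that $x_i\in(0,1)$, as in the form of the local lemma it cites, while you allow $x_i=0$; this is harmless, since the asymmetric local lemma also holds with weights in $[0,1)$.
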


\begin{proof}
The asymmetric local lemma gives a positive avoidance probability if
there are $x_i\in(0,1)$ such that
$p_i\leq x_i\prod_{j\in N(i)}(1-x_j)$ for every $i$.
Discard events of probability zero and restrict the graph to the
remaining events. Set $x_i=2p_i$. Since $\prod_j(1-a_j)\geq1-\sum_j a_j$ for
$0\leq a_j\leq1$, we have
\[
 x_i\prod_{j\in N(i)}(1-x_j)
 \geq2p_i\left(1-2\sum_{j\in N(i)}p_j\right)\geq p_i.
\]
Moreover, $0<x_i\leq1/2$. The asymmetric local lemma gives
$\Prb(\bigcap_i\overline E_i)\geq\prod_i(1-x_i)>0$.
\end{proof}

The next lemma gives the embedding needed for
Theorem~\ref{thm:cells}.

\begin{lemma}\label{lem:embedding}
Let $P$ be a permutation matrix of order $k\geq2$, let
$1\leq s\leq k$ be an integer, and put $m=\lceil k/s\rceil$
and $t=t_s(P)$.
Every $2sm\times2sm$ zero-one matrix with at most $s/(32t)$ zeros
in each row and each column contains $P$.
\end{lemma}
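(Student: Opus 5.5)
The plan is to embed $P$ by placing each row interval and each column interval at an independent random offset inside its own block of the large matrix, and then to apply Lemma~\ref{lem:lll} to the events ``a one of $P$ lands on a zero''. Let $M$ be the $2sm\times 2sm$ matrix. Partition its rows into $m$ consecutive blocks $R_1,\ldots,R_m$ of $2s$ rows each, and its columns likewise into blocks $C_1,\ldots,C_m$. For each $a\in[m]$ choose $x_a$ uniformly from $\{0,1,\ldots,s\}$. Then send row $(a-1)s+j$ of $P$, where $1\leq j\leq|I_a|\leq s$, to row $2s(a-1)+x_a+j$ of $M$. This row lies in $R_a$. Do the same for columns with independent offsets $y_b$.

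Rows within one interval keep their order because they are shifted by a common amount, and different intervals go to different, increasingly ordered blocks. So the map on rows is strictly increasing, and the same holds for columns. Hence, if every one of $P$ is mapped onto a one of $M$, then $M$ contains $P$.

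For each one $(i,j)$ of $P$, with $i\in I_a$ and $j\in I_b$, let $E_{ij}$ be the event that its image is a zero of $M$. This event depends only on $x_a$ and $y_b$. As $(x_a,y_b)$ varies, the image runs uniformly over an $(s+1)\times(s+1)$ window $W_{ij}$ of positions. Each row of $W_{ij}$ contains at most $s/(32t)$ zeros. Therefore
\[
 \Prb(E_{ij})\leq\frac{(s+1)s/(32t)}{(s+1)^2}\leq\frac1{32}.
\]

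The neighbours of $E_{ij}$ are the events of ones lying in row strip $I_a$ or in column strip $I_b$. Bounding their total probability is the main step, and it is where $t$ enters. Fix the row strip $I_a$ and sum $\Prb(E_{i'j'})$ over all ones $(i',j')$ with $i'\in I_a$. Each term equals the number of zeros in $W_{i'j'}$, divided by $(s+1)^2$. So the sum is at most $(s+1)^{-2}$ times the number of pairs (zero $z$, one $(i',j')$) with $z\in W_{i'j'}$.

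Every such window lies inside $R_a\times C_{b'}$, where $b'$ is the column interval containing $j'$. A zero $z=(u,v)$ with $v\in C_{b'}$ can therefore lie only in windows of ones from the single cell $I_a\times I_{b'}$, and that cell contains at most $t$ ones. The strip $R_a$ has $2s$ rows, each with at most $s/(32t)$ zeros, so it has at most $s^2/(16t)$ zeros. Hence the sum is at most
\[
 \frac{t\cdot s^2/(16t)}{(s+1)^2}\leq\frac1{16}.
\]
The same computation with columns bounds the column strip by $1/16$. The neighbourhood sum is thus at most $1/8\leq1/4$, and each individual probability is at most $1/32\leq 1/4$.

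Lemma~\ref{lem:lll} now gives a choice of offsets for which no $E_{ij}$ occurs, and this yields a copy of $P$ in $M$. The delicate point to check carefully is the counting step: a zero in column block $C_{b'}$ must be charged only to the at most $t$ ones of one cell. It is also worth confirming that the boundary cases (short final intervals, and the fact that $x_a+|I_a|\leq 2s$) keep every window inside its block.
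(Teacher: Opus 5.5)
Your proof is correct and follows essentially the same route as the paper. Both arguments shift each row and column interval independently inside its own $2s$-block, bound each strip's probability sum by $t$ times the zeros in a $2s$-row (or $2s$-column) strip divided by the window size, and then apply Lemma~\ref{lem:lll}. The only differences are cosmetic: you use $s+1$ shifts instead of $s$, and you phrase the strip bound as a double count rather than through the cell zero counts $z_{ab}$.
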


\begin{proof}
Let $A$ be such a matrix, and put $d=s/(32t)$. Partition the rows
and columns of $A$ into $m$ consecutive intervals of length $2s$.
Choose independent random variables
$X_1,\ldots,X_m,Y_1,\ldots,Y_m$, each uniform on
$\{0,\ldots,s-1\}$. For a pattern row $i=(a-1)s+u\in I_a$ and
a pattern column $j=(b-1)s+v\in I_b$, define
\[
 f(i)=2s(a-1)+u+X_a,
 \qquad
 g(j)=2s(b-1)+v+Y_b.
\]
Both maps are strictly increasing. Within an interval the shift is
constant, and every image of $I_a$ precedes every image of $I_{a+1}$.

Write $P_{i,\pi(i)}=1$, and let $E_i$ be the event that
$A_{f(i),g(\pi(i))}=0$. If $(i,\pi(i))$ lies in $I_a\times I_b$,
then $E_i$ uses only $X_a$ and $Y_b$. Thus we will apply
Lemma~\ref{lem:lll} by bounding the total event probability in each
row interval and each column interval. Let $z_{ab}$ be the number
of zeros in the corresponding $2s\times2s$ cell of $A$.
The $s^2$ choices of $(X_a,Y_b)$ send $(i,\pi(i))$ to distinct
positions, so $p_i=\Prb(E_i)\leq z_{ab}/s^2$.
There are at most $t$ ones of $P$ in each cell. Thus
\[
 \sum_{i\in I_a}p_i
 \leq\frac{t}{s^2}\sum_{b=1}^m z_{ab}
 \leq\frac{2td}s=\frac1{16},
\]
since the corresponding $2s$ rows of $A$ each have at most $d$
zeros. The same argument with columns gives
$\sum_{i:\,\pi(i)\in I_b}p_i\leq1/16$.

Join distinct events whose pattern points lie in a common row
interval or a common column interval. Every event has probability
at most $1/16$, and the sum of the probabilities of its neighbors
is at most $1/8$. Lemma~\ref{lem:lll} gives a choice of the shifts
for which no $E_i$ occurs. The increasing maps $f$ and $g$ then
give a copy of $P$ in $A$.
\end{proof}

We use the following theorem of Cibulka and Kyn\v cl
\cite[Theorem 1.5]{CK}: if $u$ is a positive integer,
$1/u<q<1$, and $P$ is a permutation matrix satisfying
$\ex(u,P)<qu^2$, then
\begin{equation}\label{eq:transfer}
 c(P)\leq2u^{3+\lceil\log u/(-\log q)\rceil}.
\end{equation}

\begin{proof}[Proof of Theorem~\ref{thm:cells}]
Put $t=t_s(P)$, $N=2s\lceil k/s\rceil$, and $B=ks/(32t)$.
Our bound on $s$ gives
\[
 N\leq2k+2s-2\leq3k.
\]
We claim that every $4k\times4k$ zero-one matrix $A$ with at most
$B$ zeros contains $P$. Delete $k$ rows with the most zeros.
Every remaining row has at most $B/(k+1)<s/(32t)$ zeros.
Next delete $k$ columns with the most zeros in the remaining
matrix. Every surviving column also has at most $B/(k+1)$ zeros,
and deleting columns does not increase the number of zeros in
any row. The resulting $3k\times3k$ matrix contains an
$N\times N$ submatrix to which Lemma~\ref{lem:embedding} applies.
This proves the claim.

Consequently every $P$-avoiding matrix of order $4k$ has more
than $B$ zeros, so
\[
 \ex(4k,P)<16k^2-B=q(4k)^2,
 \qquad q=1-\frac{s}{512tk}.
\]
Since $s\leq k$ and $t\geq1$, we have
$1/(4k)<511/512\leq q<1$. Apply \eqref{eq:transfer} with $u=4k$.
Using $-\log q\geq1-q=s/(512tk)$ and
$\lceil x\rceil\leq x+1$, we obtain
\[
 c(P)\leq2(4k)^{4+(512tk/s)\log(4k)}.
\]
Taking logarithms gives the stated bound, since $kt/s\geq1$ and
$\log(4k)=O(\log k)$ for $k\geq2$.
\end{proof}

\section{Random permutations}\label{sec:random}

\begin{proof}[Proof of Theorem~\ref{thm:random}]
Let $k$ be sufficiently large, and choose
\[
 s=\left\lceil\sqrt{k\log k}\right\rceil,
 \qquad h=\lceil16\log k\rceil.
\]
A cell with $s$ rows and $s$ columns has expected occupancy $s^2/k$,
which is asymptotic to $\log k$. We show that every cell has fewer
than $h$ ones with probability tending to one. For sufficiently large
$k$, we have $s\leq\lfloor k/2\rfloor+1$ and
$s^2/(k-h+1)\leq2\log k$. Fix a cell $I_a\times I_b$, and let
$Z_{ab}$ be its number of ones in $P_k$. If $Z_{ab}\geq h$, then some
$h$ rows in $I_a$ all map into $I_b$. Write
$(r)_h=r(r-1)\cdots(r-h+1)$. A union bound gives
\begin{align*}
 \Prb(Z_{ab}\geq h)
 &\leq\binom{s}{h}\frac{(s)_h}{(k)_h}\\
 &\leq\frac{s^{2h}}{h!(k-h+1)^h}
 \leq\left(\frac{2e\log k}{h}\right)^h
 \leq\left(\frac e8\right)^h,
\end{align*}
where we used $h!\geq(h/e)^h$. If either interval has fewer than
$h$ indices, the probability is zero, so the same bound holds.
There are at most $k^2$ cells. Another union bound gives
\[
 \Prb\bigl(t_s(P_k)\geq h\bigr)
 \leq k^2(e/8)^h
 \leq k^{\,2+16\log(e/8)}=o(1).
\]
With probability tending to one, we have $t_s(P_k)<h=O(\log k)$
and therefore $kt_s(P_k)/s=O(\sqrt{k\log k})$.
Theorem~\ref{thm:cells} gives
\[
 \log c(P_k)=O\bigl(\sqrt{k}(\log k)^{5/2}\bigr),
\]
with an absolute implied constant. By \eqref{eq:comparisons},
$\log L(P_k)\leq\log(2.88)+2\log c(P_k)$, so the same upper bound
holds for $\log L(P_k)$. This proves the theorem.
\end{proof}

\section{Concluding remarks}

For almost all permutations, the bounds on $\log c(P_k)$ in
\eqref{eq:lower} and Theorem~\ref{thm:random} differ by a factor
of order $(\log k)^3$. The same is true for $\log L(P_k)$ by
\eqref{eq:comparisons}. It would be interesting to determine
whether $\log c(P_k)=\Theta(\sqrt{k})$ with probability tending
to one. By \eqref{eq:comparisons}, this is equivalent to the same
question for $\log L(P_k)$.

\section*{Declaration of generative AI and AI-assisted technologies}
Codex with GPT-6 Astra Ultra assisted with finding the proofs,
checking the arguments, and revising the manuscript. The author
read and edited the proofs and takes responsibility for the
content of the article.

\end{document}